\renewcommand{\phi}{\varphi}
\renewcommand{\bar}{\overline}
\newcommand{\NN}{\mathbb{N}}
\newcommand{\ZZ}{\mathbb{Z}}
\newcommand{\G}{\mathcal G}
\newcommand{\Wr}{\,{\rm Wr}\,}
\newtheorem{thm}{Theorem}[section]
\newtheorem*{thm*}{Theorem}
\newtheorem{cor}[thm]{Corollary}
\newtheorem{lem}[thm]{Lemma}
\newtheorem{prop}[thm]{Proposition}
\theoremstyle{definition}
\newtheorem{defn}[thm]{Definition}
\theoremstyle{remark}
\newtheorem{rem}[thm]{Remark}
\newcommand\blfootnote[1]{%
  \begingroup
  \renewcommand\thefootnote{}\footnote{#1}%
  \addtocounter{footnote}{-1}%
  \endgroup
}
\newfont{\eufm}{eufm10}
\begin{document}
\title{\vspace{-8mm}Condensed groups in product varieties}

\author{D. Osin\thanks{This work has been supported by the National Science Foundation grant DMS-1612473 and the Focused Research Group grant DMS-1853989.}}
\date{}

\maketitle
\vspace*{-6mm}

\begin{abstract}\blfootnote{\textbf{MSC} Primary: 20F65. Secondary: 20E10, 20E22.}
A finitely generated group $G$ is said to be \emph{condensed} if its isomorphism class in the space of finitely generated marked groups has no isolated points. We prove that every product variety $\mathcal{UV}$, where $\mathcal{U}$ (respectively, $\mathcal{V}$) is a non-abelian (respectively, a non-locally-finite) variety, contains a condensed group. In particular, there exist condensed groups of finite exponent. As an application, we obtain some results on the structure of the isomorphism and elementary equivalence relations on the set of finitely generated groups in $\mathcal{UV}$.
\end{abstract}

\section{Introduction}

Let $\G$ denote the space of finitely generated marked groups. Informally, $\G$ is the set of all pairs $(G,A)$, where $G$ is a group and $A$ is an ordered finite generating set of $G$, considered up to a natural equivalence relation and endowed with the topology induced by local convergence of Cayley graphs. Given a finitely generated group $G$, we denote by $[G]\subseteq \G $ its \emph{isomorphism class}; that is,
$$
[G]=\{ (H,B)\in \G\mid H\cong G \}.
$$

The following definition is inspired by connections between the topological properties of isomorphism classes in $\G$ and model theory.

\begin{defn}
A finitely generated group $G$ is said to be \emph{condensed} if $[G]$ has no isolated points.
\end{defn}

As  shown in \cite{Osi20}, condensed groups lead to non-trivial examples of subspaces of $\G$ satisfying a topological zero-one law for $\mathcal L_{\omega_1, \omega}$-sentences. In addition, the existence of a condensed group in a closed subset $\mathcal S\subseteq \G$ has strong consequences for the structure of the isomorphism and elementary equivalence relations on $\mathcal S$ (see Theorem \ref{01} below).

Known examples of condensed groups include finitely generated groups isomorphic to their direct square, generic torsion-free lacunary hyperbolic groups \cite{Osi20}, certain solvable groups \cite{Wil}, and the iterated monodromy group of the polynomial $z^2+i$ (see \cite{Nek}). On the other hand, condensed groups do not occur among linear groups, nor among finitely presented Hopfian groups. It is unknown whether a condensed group can be finitely presented. For more details, we refer the reader to \cite{Osi20} and references therein.

The goal of this note is to suggest an elementary construction of condensed groups based on wreath products. Namely, we prove the following.

\begin{thm}\label{main}
Let $B$ be a non-abelian group and let $H$ be a finitely generated infinite group. The unrestricted wreath product $B\Wr H$ contains a condensed subgroup.
\end{thm}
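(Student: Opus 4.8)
The plan is to realize the condensed subgroup as one member of a \emph{perfect family} of finitely generated subgroups of $B\Wr H$ that are pairwise isomorphic yet pairwise distinct as marked groups. First I would reduce to the case where $B$ is $2$-generated: since $B$ is non-abelian, choose $a,b\in B$ with $c:=[a,b]\neq 1$, note that $\langle a,b\rangle\Wr H\leq B\Wr H$ (the unrestricted base $\langle a,b\rangle^H$ embeds into $B^H$), and recall that condensation is an isomorphism invariant, so a condensed subgroup of $\langle a,b\rangle\Wr H$ is one of $B\Wr H$. The point of non-abelianity is that $c$ detects coincidence of coordinates: elements of $B^H$ with disjoint supports commute, whereas placing $a$ and $b$ in a common coordinate forces the value $c$ into their commutator there.

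Next I would set up the encoding. Fix a finite generating set $S$ of $H$ and, using that $H$ is infinite, a \emph{lacunary} sequence $(g_n)_{n\ge 1}$ in $H$ whose word lengths grow very fast, with an auxiliary neighbour $g_n'$ for each $n$. To $\omega\in\{0,1\}^{\NN}$ I attach two \emph{infinite-support} elements $X_\omega,Y_\omega\in B^H$: let $X_\omega$ carry the value $a$ at every $g_n$, and let $Y_\omega$ carry $b$ at $g_n$ when $\omega_n=1$ and at $g_n'$ when $\omega_n=0$, so site $n$ is ``merged'' or ``split'' according to $\omega_n$. Such elements exist precisely because the base of $B\Wr H$ is the unrestricted product. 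Put $G_\omega=\langle S,X_\omega,Y_\omega\rangle$ with the evident marking $g_\omega$. I would then check that $\omega\mapsto(G_\omega,g_\omega)\in\G$ is a continuous injection: \emph{continuity} holds because lacunarity forces a word of length $\le N$ to have base part given, near each site $g_n$, by a fixed local expression, while reaching a site at distance $|g_n|$ needs a word of length $\gtrsim|g_n|$, so the relations of length $\le N$ depend only on the finitely many $\omega_n$ with $|g_n|\lesssim N$; \emph{injectivity} holds because for each $n$ one writes a long word $w_n$, built from a conjugator carrying $g_n$ to a standard position, with $w_n(g_\omega)=1$ iff $\omega_n=0$, so distinct $\omega$ yield distinct relation sets. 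Restricting to a Cantor subset $\Omega$ on which both merged and split sites occur infinitely often (e.g. by fixing the even coordinates and letting the odd ones vary) gives a homeomorphism of a Cantor set onto a perfect subset of $\G$.

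The main obstacle, and the real content, is to show that all the $G_\omega$ with $\omega\in\Omega$ are pairwise isomorphic, so that this perfect set lands inside a single isomorphism class $[G]$. Here I would argue by absorption (``Hilbert hotel''): toggling one coordinate $\omega_n$ trades a merged site for a split one, and since every $\omega\in\Omega$ has infinitely many of each type, this single discrepancy can be matched away to infinity by a type-preserving bijection of the site set. The hard part is to upgrade such a combinatorial matching to a genuine group isomorphism $G_\omega\to G_{\omega'}$ — one that is compatible with the top subgroup $H$ and with the overlapping supports of the conjugates ${}^hX_\omega,{}^hY_\omega$. Concretely, one must show that the isomorphism type of $\langle\,{}^hX_\omega,{}^hY_\omega:h\in H\,\rangle\rtimes H$ is insensitive to the arrangement of merged and split sites as long as both occur infinitely often; this is the step I expect to require the most care, and it is where the unrestricted base and the commutator $c$ are used decisively.

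Granting the isomorphisms, the image of $\Omega$ is a perfect subset of a single class $[G]$, so the base point $(G,g)$ is a non-isolated point of $[G]$; since condensation does not depend on the chosen marking (as established in \cite{Osi20}), $G$ is condensed, proving Theorem \ref{main}. I would note in passing that if $B$ and $H$ lie in finite-exponent varieties then $B\Wr H$, and hence $G$, has finite exponent, which accounts for the finite-exponent condensed groups announced in the abstract.
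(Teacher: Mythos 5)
Your overall skeleton --- encode a Cantor set of parameters into marked subgroups of $B\Wr H$ by a continuous injection into $\G$ and land the image inside one isomorphism class --- matches the paper's, and your injectivity and continuity steps correspond to Lemmas \ref{inj} and \ref{ws}. But the step you yourself flag as ``the hard part'' is a genuine gap, and it is exactly the crux of the theorem: you give no argument that the groups $G_\omega$, $\omega\in\Omega$, are pairwise isomorphic, and the ``Hilbert hotel'' matching you propose is unlikely to yield one. A type-preserving bijection of the site set $\{g_n,g_n'\}_{n}$ is merely a permutation of a subset of $H$; to induce an isomorphism $G_\omega\to G_{\omega'}$ compatible with the top copy of $H$ (which sits inside both groups and interacts with all the conjugates ${}^hX_\omega,{}^hY_\omega$), it would essentially have to be implemented by right multiplication or some other ambient symmetry of $B\Wr H$, and a matching that merely pairs merged sites with merged sites has no reason to extend to such a map. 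In general there is no reason for $G_\omega\cong G_{\omega'}$ at all, so the perfect set you build need not lie in a single class $[G]$.

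The paper sidesteps the isomorphism problem instead of solving it: it parametrizes by subsets $S\subseteq H$, sets $\bar b_S$ equal to $b$ exactly on $S$, and observes that $\bar b_{hS}=h\bar b_Sh^{-1}$ with $h\in H\le G_S$, so all left translates $hS$ give \emph{literally the same subgroup} $G_S=G_{hS}$ with different markings; no isomorphism between distinct groups is ever needed. One then chooses $S$ (Proposition \ref{dense}) with non-discrete left orbit and dense right orbit, the latter being what makes the encoding continuous. If you want to rescue your construction, you should replace ``pairwise isomorphic via a matching'' by an analogous translation trick. A secondary weak point: your justification of continuity (``reaching a site at distance $|g_n|$ needs a word of length $\gtrsim|g_n|$'') fails as stated, since $X_\omega$ and $Y_\omega$ have infinite support and so every site is reached by a word of length $1$; a short relation does impose conditions at far sites $g_n$ that depend on $\omega_n$ (merged sites force an interleaved relation $\prod a^{\e_i}b^{\delta_i}=1$, split sites force $a^{\sum\e_i}=b^{\sum\delta_i}=1$), and local constancy on $\Omega$ holds only because both site types occur infinitely often, so both conditions are forced regardless of the far coordinates --- this is precisely the role played by density of the right orbit in Lemma \ref{ws}.
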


This theorem provides new examples of condensed groups satisfying non-trivial laws. Using results of \cite{Osi20} we also obtain some information about the structure of the isomorphism and elementary equivalence relations on the set of finitely generated groups in decomposable varieties.

\begin{cor}\label{cor1}
Suppose that $\mathcal{U}$ (respectively, $\mathcal{V}$) is a non-abelian (respectively, non-locally-finite) variety of groups. Then the product variety $\mathcal{UV}$ contains a condensed group. In particular, the following hold:
\begin{enumerate}
\item[(a)] the isomorphism relation on the subspace $\{ (G,A)\in \G \mid G\in \mathcal {UV}\}$ is non-smooth;
\item[(b)] $\mathcal {UV}$ contains a subset of cardinality $2^{\aleph_0}$ consisting of pairwise non-isomorphic, elementarily equivalent, finitely generated groups.
\end{enumerate}
\end{cor}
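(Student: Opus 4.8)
The plan is to deduce Corollary \ref{cor1} from Theorem \ref{main} together with the structural results on condensed groups quoted from \cite{Osi20}. The first task is to produce a condensed group lying inside an arbitrary product variety $\mathcal{UV}$ of the stated type. Since $\mathcal{U}$ is non-abelian, it contains a non-abelian group $B$; indeed, because $\mathcal{U}$ is a variety not contained in the variety of abelian groups, the relatively free group of $\mathcal{U}$ on two generators is non-abelian, so such a $B$ exists inside $\mathcal{U}$. Since $\mathcal{V}$ is non-locally-finite, it contains a finitely generated infinite group $H$: a variety all of whose finitely generated members are finite would be locally finite, so the failure of local finiteness yields a finitely generated $H\in\mathcal{V}$ that is infinite. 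Theorem \ref{main} then furnishes a condensed subgroup $C$ of the unrestricted wreath product $B\Wr H$.

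The key point is to verify that this ambient wreath product, and hence $C$, lies in the product variety $\mathcal{UV}$. The product variety $\mathcal{UV}$ consists precisely of those groups $G$ possessing a normal subgroup $N\in\mathcal{U}$ with quotient $G/N\in\mathcal{V}$; this is the standard description of the Mal'cev product of varieties. For the wreath product $W=B\Wr H$, the base group $\prod_{H} B$ (the unrestricted direct power, i.e.\ the full Cartesian product) is normal in $W$ with quotient $W/\prod_H B\cong H\in\mathcal{V}$. A Cartesian power of a group in $\mathcal{U}$ again satisfies every law of $\mathcal{U}$, since varieties are closed under arbitrary Cartesian products, so $\prod_H B\in\mathcal{U}$. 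Therefore $W\in\mathcal{UV}$, and as varieties are closed under subgroups, the condensed subgroup $C$ lies in $\mathcal{UV}$ as well. The main obstacle in this part is purely bookkeeping: one must make sure to use the \emph{unrestricted} wreath product so that the base is the full Cartesian product (which is what makes Theorem \ref{main} applicable) while simultaneously confirming that this Cartesian base still belongs to $\mathcal{U}$; the relevant closure property of varieties under unrestricted direct products is exactly what reconciles these two requirements.

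With a condensed group $C\in\mathcal{UV}$ in hand, statements (a) and (b) follow by invoking Theorem \ref{01} (referenced in the excerpt) applied to the closed subspace $\mathcal{S}=\{(G,A)\in\G\mid G\in\mathcal{UV}\}$. One first checks that $\mathcal{S}$ is indeed closed in $\G$: membership in a fixed variety is determined by satisfaction of a set of laws, and the satisfaction of any single law is a closed (in fact clopen-approximable) condition in the local topology on $\G$, so an intersection of such conditions is closed. Since $\mathcal{S}$ is a closed subset of $\G$ containing the condensed group $C$, the quoted results on the consequences of having a condensed group in a closed subspace apply directly: they yield the non-smoothness of the isomorphism relation on $\mathcal{S}$, giving (a), and the existence of a continuum-sized family of pairwise non-isomorphic but elementarily equivalent finitely generated groups in $\mathcal{S}$, giving (b). I expect the genuinely substantive input here to be Theorem \ref{main} itself; once it is granted, the corollary is a matter of correctly identifying $B$ and $H$ inside $\mathcal{U}$ and $\mathcal{V}$, verifying the variety membership $W\in\mathcal{UV}$, and citing the model-theoretic consequences from \cite{Osi20}.
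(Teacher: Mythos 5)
Your proposal is correct and follows essentially the same route as the paper: extract a non-abelian $B\in\mathcal U$ and a finitely generated infinite $H\in\mathcal V$, apply Theorem \ref{main} to get a condensed subgroup of $B\Wr H\in\mathcal{UV}$, note closure of varieties under subgroups, and invoke Theorem \ref{01} on the closed subspace $\mathcal S$. The only hypothesis of Theorem \ref{01} you leave implicit is that $\mathcal S$ is isomorphism-invariant, which is immediate since membership in a variety depends only on the isomorphism type.
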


An equivalence relation $E$ on a topological space $X$ is called \emph{smooth} if there is a Polish space $P$ and a Borel map $\beta \colon X\to P$ such that for any $x,y\in X$, we have $xEy$ if and only if $\beta(x)=\beta(y)$. Informally, claim (a) of Corollary \ref{cor1} means that finitely generated groups in $\mathcal{UV}$ cannot be ``explicitly classified" up to isomorphism using invariants from a Polish space. For more on complexity of Borel equivalence relations in $\G$, see \cite{TS}.

Recall also that two groups are \emph{elementarily equivalent} if they satisfy the same first order sentences in the language of groups. Claim (b) of Corollary \ref{cor1} shows that elementary equivalence in $\mathcal{UV}$ is much weaker than isomorphism. It is worth noting that finding examples of elementarily equivalent, non-isomorphic, finitely generated groups is a rather non-trivial task since the standard tools for constructing models -- ultrapowers, omitting types, and the L\" owenheim-Skolem theorem -- are not available in this case.

It was previously known that the variety of solvable groups of class $3$ contains condensed groups. Indeed, this fact easily follows from a result of Williams \cite{Wil} as explained in \cite[Example 2.8]{Osi20}. Corollary \ref{cor1} implies that such examples can already be found among (nilpotent of class $2$)-by-abelian groups. In contrast, abelian-by-nilpotent groups cannot be condensed \cite[Proposition 6.2]{Osi20}. It would be interesting to know whether condensed groups exist in some other solvable varieties, e.g., in the variety of center-by-metabelian groups.

Yet another variety to which Corollary \ref{cor1} applies is the Burnside variety $\mathcal B_n$ of all groups of exponent $n=n_1n_2$, where $n_1>2$ and $n_2$ is any number for which $\mathcal B_{n_2}$ is not locally finite (e.g., we can take $n=1995$ \cite{A}). In particular, we obtain the following.

\begin{cor}
There exist condensed groups of finite exponent.
\end{cor}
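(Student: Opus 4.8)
The plan is to deduce this corollary directly from Corollary \ref{cor1} by instantiating $\mathcal{U}$ and $\mathcal{V}$ as suitable Burnside varieties. Concretely, I would fix integers $n_1 > 2$ and $n_2$ with $\mathcal{B}_{n_2}$ not locally finite, set $n = n_1 n_2$, and take $\mathcal{U} = \mathcal{B}_{n_1}$ and $\mathcal{V} = \mathcal{B}_{n_2}$; for definiteness one may use $n_1 = 3$, $n_2 = 665$, and $n = 1995$. It then suffices to check that this pair satisfies the hypotheses of Corollary \ref{cor1} and that every group in the resulting product variety $\mathcal{B}_{n_1}\mathcal{B}_{n_2}$ has finite exponent; the condensed group supplied by Corollary \ref{cor1} will then be the desired example.

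First I would verify the hypotheses. The variety $\mathcal{B}_{n_1}$ is non-abelian whenever $n_1 > 2$: if $n_1$ has an odd prime divisor $p$, then $\mathcal{B}_{n_1}$ contains the non-abelian Heisenberg group $UT_3(\mathbb{F}_p)$ of exponent $p$, while if $n_1$ is a power of $2$ with $n_1 \geq 4$ it contains the dihedral group of order $8$ (of exponent $4$); in either case $\mathcal{B}_{n_1}$ contains a non-abelian group. The variety $\mathcal{B}_{n_2}$ is non-locally-finite by the negative solution of the Burnside problem: for the chosen $n_2$ the free Burnside group on two generators is infinite (Adian \cite{A}), so $\mathcal{B}_{n_2}$ is not locally finite. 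Thus Corollary \ref{cor1} applies and yields a condensed, hence finitely generated, group $G \in \mathcal{B}_{n_1}\mathcal{B}_{n_2}$.

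It remains to confirm that $G$ has finite exponent. By definition of the product variety, $G$ has a normal subgroup $N$ of exponent dividing $n_2$ with $G/N$ of exponent dividing $n_1$. Hence for every $g \in G$ we have $g^{n_1} \in N$ and therefore $g^{n_1 n_2} = (g^{n_1})^{n_2} = 1$, so $G$ has exponent dividing $n = n_1 n_2$. This shows $\mathcal{B}_{n_1}\mathcal{B}_{n_2} \subseteq \mathcal{B}_n$ and completes the argument.

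There is essentially no genuine obstacle internal to this deduction; it is a direct specialization of Corollary \ref{cor1}, and the exponent bound is the elementary extension computation above. The only nontrivial ingredient is external and already invoked, namely the existence of infinite finitely generated groups of exponent $n_2$, which is exactly what makes $\mathcal{B}_{n_2}$ non-locally-finite and is furnished by the Novikov--Adian theory cited as \cite{A}. The one point requiring a moment's care is the orientation of the product-variety convention (which factor occurs as the normal subgroup and which as the quotient), but under either convention the exponent of an extension of a $\mathcal{B}_{n_2}$-group by a $\mathcal{B}_{n_1}$-group divides $n_1 n_2$, so the conclusion is unaffected.
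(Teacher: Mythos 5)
Your proposal is correct and is exactly the paper's argument: the paper obtains this corollary by applying Corollary \ref{cor1} to $\mathcal U=\mathcal B_{n_1}$ and $\mathcal V=\mathcal B_{n_2}$ with $n_1>2$ and $\mathcal B_{n_2}$ not locally finite (e.g., $n=n_1n_2=1995$ via Adian), noting $\mathcal B_{n_1}\mathcal B_{n_2}\subseteq\mathcal B_{n_1n_2}$. Your only slip is the orientation of the extension (the paper's convention puts the $\mathcal U$-group as the normal subgroup), but as you note this does not affect the exponent bound.
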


In the next section, we collect some preliminary information about the space of finitely generated groups, wreath products, and group varieties. The proofs of the main results are given in Section 3.

\section{Preliminaries}

\paragraph{2.1. The space of finitely generated marked groups.} We begin with the definition suggested by Grigorchuck \cite{Gri}. Let $\G_n$ denote the set of equivalence classes of all pairs $(G,A)$, where $G$ is a group and $A$ an ordered generating set of $G$ of cardinality $n$,  modulo the following equivalence relation: $$(G, (a_1, \ldots, a_n))\approx (H, (b_1, \ldots , b_n))$$ if the map $a_1\mapsto b_1,\; \ldots,\; a_n\mapsto b_n$ extends to an isomorphism $G\to H$. To simplify our notation, we write $(G,A)$ for the $\approx$-class of $(G,A)$.

We also write $(G,(a_1, \ldots, a_n))\approx_r (H,(b_1, \ldots, b_n))$ for some $r\in \mathbb N$ if there is an isomorphism (in the category of directed graphs) between the balls of radius $r$ around the identity in the Cayley graphs of $G$ and $H$ with respect to the generating sets $(a_1, \ldots, a_n)$ and $(b_1, \ldots, b_n)$, respectively, that takes edges labeled by $a_i$ to edges labeled by $b_i$ for all $i$. The topology on $\G_n$ is defined by taking the sets
\begin{equation}\label{UGA}
U_{G,A}(r)=\{ (H,B)\in \G_n\mid (H,B)\approx _r (G,A)\},
\end{equation}
where $(G,A)$ ranges in $\G$ and $r\in \NN$, as the base of neighborhoods. Thus, a sequence $\{(G_i, A_i)\}_{i\in \NN}$ converges to $(G,A)$ in $\G_n$ if for every $r\in \mathbb N$, $(G_i,A_i)$ and $(G,A)$ are $r$-similar for all sufficiently large $i$. It is easy to see that $\approx$ is the intersection of the equivalence relations $\approx_r$ and hence the topology on $\G_n$ is well-defined.

We record an elementary observation, which will be used in the next section.

\begin{lem}\label{GAHB}
Let $(G,A), (H,B)\in \G_n$, where $A=(a_1, \ldots, a_n)$ and $B=(b_1, \ldots, b_n)$. Given a word $w_A$ in the alphabet $A^{\pm 1}$, let $w_B$ denote the word obtained from $w_A$ by replacing all occurrences of $a_i^{\pm 1}$ with $b_i^{\pm 1}$ for all $i$. Suppose that a word $w_A$ of length at most $2r$ in the alphabet $A^{\pm 1}$ represents $1$ in $G$ if and only if the corresponding word $w_B$ in the alphabet $B^{\pm 1}$ represents $1$ in $H$. Then $(G,A)\approx _r (H,B)$.
\end{lem}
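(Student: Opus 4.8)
The plan is to build an explicit, label-preserving isomorphism $\phi$ between the radius-$r$ balls of the two Cayley graphs and to read its required properties directly off the hypothesis. For a word $w_A$ in $A^{\pm1}$ write $\overline{w_A}$ for the element of $G$ it represents, and similarly $\overline{w_B}\in H$; note $w_A$ and $w_B$ always have equal length, and that the $B$-translate of $u_Av_A^{-1}$ is $u_Bv_B^{-1}$. Since the vertices of the radius-$r$ ball are the elements of word length at most $r$, I first define $\phi$ on vertices by $\phi(\overline{w_A})=\overline{w_B}$ for any word $w_A$ of length at most $r$, and check that this is well defined and lands in the radius-$r$ ball of $H$.

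Well-definedness is exactly where the bound $2r$ enters. If $u_A$ and $v_A$ have length at most $r$ and $\overline{u_A}=\overline{v_A}$, then $u_Av_A^{-1}$ represents $1$ in $G$ and has length at most $2r$, so by hypothesis $u_Bv_B^{-1}$ represents $1$ in $H$, i.e. $\overline{u_B}=\overline{v_B}$. As $|w_B|=|w_A|\le r$, the image $\overline{w_B}$ lies in the radius-$r$ ball of $H$. Because the hypothesis is an ``if and only if'', the construction is symmetric in $(G,A)$ and $(H,B)$: the analogous map $\psi(\overline{w_B})=\overline{w_A}$ is also well defined, and $\psi\circ\phi$, $\phi\circ\psi$ are the respective identities, so $\phi$ is a bijection of vertex sets.

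It remains to match edges and their labels. An edge labelled $a_i$ joins $g$ to $ga_i$; the point that makes $2r$ (rather than $2r+1$) suffice is that such an edge of the $r$-ball may be taken to have its tail $g$ of word length at most $r-1$, so a geodesic representative $u_A$ of $g$ has $|u_A|\le r-1$ and $u_Aa_i$ is a representative of $ga_i$ of length at most $r$. Then $\phi(g)=\overline{u_B}$ while $\phi(ga_i)=\overline{u_Bb_i}=\phi(g)\,b_i$, and since $\overline{u_B}$ again has word length at most $r-1$, the $b_i$-edge at $\phi(g)$ is present in the radius-$r$ ball of $H$. Thus $\phi$ sends the $a_i$-edge at $g$ to the $b_i$-edge at $\phi(g)$; the symmetric argument applied to $\psi=\phi^{-1}$ shows $\phi^{-1}$ is edge-preserving as well, so $\phi$ is an isomorphism of labelled directed graphs and $(G,A)\approx_r(H,B)$.

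I expect the only genuinely delicate point to be this bookkeeping of word lengths, together with the attendant convention for which edges belong to the $r$-ball: comparing two representatives of a vertex costs length $2r$ (hence the hypothesis is stated up to length $2r$), whereas certifying an edge costs only the length of a single geodesic representative plus one generator, and this stays within the same budget precisely because an edge of the $r$-ball may be taken to issue from a vertex of word length strictly less than $r$. (With the alternative convention that includes every edge between two vertices of word length $r$, one would instead need the hypothesis up to length $2r+1$.)
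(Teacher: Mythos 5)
Your proof is correct and takes essentially the same approach as the paper: the paper's entire proof is the one-line assertion that the map $w_A\mapsto w_B$ induces the required isomorphism between the balls of radius $r$, and you have simply supplied the well-definedness, bijectivity, and edge-matching details (including the length bookkeeping and the convention for which edges belong to the $r$-ball) that the paper leaves implicit.
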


\begin{proof}
It is easy to see that the map $w_A\mapsto w_B$ induces the required isomorphism between the balls of radius $r$ around the identity in the Cayley graphs of $(G,A)$ and $(H,B)$.
\end{proof}

Identifying $(G,(a_1, \ldots, a_n))$ with $(G,(a_1, \ldots, a_n, 1))$ gives rise to an embedding $\G_n \subseteq \G_{n+1}$. The topological union $$\G=\bigcup_{n\in \NN} \G_n$$ is called the \emph{space of marked finitely generated groups}.

It turns out that there is a group of homeomorphisms of $\G$ whose orbits are exactly the isomorphism classes. One obvious corollary of this fact is the following.

\begin{prop}[{\cite[Corollary 6.1]{Osi20}}]\label{dich}
For every finitely generated group $G$, the isomorphism class $[G]$ is either discrete or has no isolated points in $\G$.
\end{prop}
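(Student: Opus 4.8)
The plan is to obtain the dichotomy as a purely formal consequence of the quoted fact that there is a group $\Gamma$ of homeomorphisms of $\G$ whose orbits are exactly the isomorphism classes. The point is that homogeneity under a transitive group of homeomorphisms forces an orbit to be either discrete or to have no isolated points, and $[G]$ is precisely one such orbit.

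First I would record an elementary observation: if $h\colon X\to X$ is a homeomorphism of a topological space and $S\subseteq X$ is stabilized by $h$, i.e.\ $h(S)=S$, then for $x\in S$ the point $x$ is isolated in $S$ if and only if $h(x)$ is isolated in $S$. This holds because $h$ restricts to a homeomorphism of $S$ with its subspace topology, and a homeomorphism carries open singletons to open singletons. I would then note that each $\gamma\in\Gamma$ stabilizes $[G]$: since $\gamma$ permutes the orbits and sends any point of $[G]$ back into $[G]$, its image $\gamma([G])$ is again the orbit $[G]$.

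Next, fix a finitely generated group $G$ and suppose some $x_0\in[G]$ is isolated in $[G]$. For an arbitrary $y\in[G]$, transitivity of the $\Gamma$-action on the orbit $[G]$ yields $\gamma\in\Gamma$ with $\gamma(x_0)=y$; by the observation above, $y$ is isolated in $[G]$ as well. Hence every point of $[G]$ is isolated, and $[G]$ is discrete. The contrapositive gives exactly the assertion: if $[G]$ is not discrete, then it has no isolated points.

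The only substantive ingredient is the homogeneity of $[G]$ supplied by the quoted group action; once this is granted, the remainder is formal. Accordingly, the genuine obstacle lies entirely in the fact we are permitted to assume, namely that isomorphism classes arise as orbits of a group acting by homeomorphisms on $\G$ — that is, in producing, from an abstract isomorphism $H\cong G$, a homeomorphism of the ambient space $\G$ carrying one marking to another in a way that varies continuously with the marking. Here, however, we may invoke this as known, so no further work on the construction of $\Gamma$ is needed.
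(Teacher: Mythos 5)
Your proof is correct and follows exactly the route the paper indicates: it states the existence of a group of homeomorphisms of $\G$ whose orbits are the isomorphism classes and then calls the dichotomy an ``obvious corollary,'' which is precisely the homogeneity argument you spell out. Your identification of the real content --- constructing that group of homeomorphisms, which is done in \cite{Osi20} and merely quoted here --- is also accurate.
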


\paragraph{2.2. Wreath products and varieties of groups.} For any groups $B$ and $H$, we denote by $B^H$ the set of all functions $f\colon H\to B$. We think of $B^H$ as a group with respect to pointwise multiplication. Thus, $B^H$ is isomorphic to the direct product of $|H|$ copies of $B$.

The (unrestricted) wreath product of $B$ and $H$, denoted $B \Wr H$, is the split extension of $B^H$ by $H$ where the action of $H$ on $B^H$ by conjugation is given by the formula
\begin{equation}\label{hfh}
(hfh^{-1}) (x)= f(h^{-1}x)
\end{equation}
for all $h,x\in H$ and $f\in B^H$.

Recall that a \emph{variety} is a class of groups satisfying a fixed system of identities. Alternatively, varieties can be characterized via Birkhoff's theorem: a class of groups is a variety if and only if it is closed under taking homomorphic images, subgroups, and (unrestricted) direct products. For more details on varieties of groups and all unexplained notation, the reader may consult the book \cite{Neu}.

Given two group varieties $\mathcal{U}$ and $\mathcal V$,  the product variety $\mathcal{UV}$ is defined to be the class of all extensions of groups from $\mathcal U$ by groups from $\mathcal V$. In particular, for any $B\in \mathcal U$ and $H\in \mathcal V$, we have $B\Wr H\in \mathcal{UV}$.

\section{Condensed subgroups of wreath products}

\paragraph{3.1. Auxiliary results from topological dynamics.}
Given a group $H$, we denote by $2^H$ the space of all subsets of $H$ endowed with the product topology, which coincides with the topology of pointwise convergence of characteristic functions in our case. Thus, $2^H$ is a compact Hausdorff space. The group $H$ acts on $2^H$ by homeomorphisms via left and right multiplication. In what follows, we refer to these actions as the \emph{left} and \emph{right} actions of $H$.

An action of a group $H$ on a topological space is said to be \emph{topologically transitive} if for every non-empty open sets $U$, $V$, there exists $h\in H$ such that $h(U)\cap V\ne \emptyset$. The following result is well-known. We provide a proof for completeness.

\begin{lem}
For every infinite group $H$, both left and right actions of $H$ on $2^H$ are topologically transitive.
\end{lem}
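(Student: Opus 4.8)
The plan is to use the basic structure of the space $2^H$: its topology is generated by the clopen cylinder sets determined by finitely many coordinates. A typical basic open set $U$ is specified by a finite partition $H = P \sqcup N \sqcup R$ (where $P$ is a finite set of elements required to be \emph{in} the subset, $N$ a finite set required to be \emph{out}, and $R$ the unconstrained remainder) and consists of all $S \subseteq H$ with $P \subseteq S$ and $N \cap S = \emptyset$. So to verify topological transitivity it suffices to check, for two such basic cylinders $U$ (determined by finite sets $P_1, N_1$) and $V$ (determined by finite sets $P_2, N_2$), that some translate $hU$ meets $V$.

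First I would treat the left action, where $h$ acts by $S \mapsto hS$. I want to find an $h \in H$ and a single set $S$ simultaneously satisfying $S \in U$ (so $P_1 \subseteq S$, $N_1 \cap S = \emptyset$) and $hS \in V$ (so $P_2 \subseteq hS$, $N_2 \cap hS = \emptyset$). The condition $hS \in V$ is equivalent to $S \in h^{-1}V$, i.e. $h^{-1}P_2 \subseteq S$ and $h^{-1}N_2 \cap S = \emptyset$. Thus I need to choose $h$ so that the two lists of ``in'' requirements $P_1, h^{-1}P_2$ and the two lists of ``out'' requirements $N_1, h^{-1}N_2$ are mutually consistent, i.e. so that $(P_1 \cup h^{-1}P_2) \cap (N_1 \cup h^{-1}N_2) = \emptyset$; once they are, I simply set $S = P_1 \cup h^{-1}P_2$, which then lies in $U \cap h^{-1}V$, witnessing $hS \in hU \cap V$. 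The point is that all four sets are finite while $H$ is infinite, so the ``bad'' values of $h$ — those creating an overlap between an ``in'' requirement and an ``out'' requirement — are confined to finitely many left cosets' worth of translates and hence form a finite subset of $H$. Concretely, a collision $h^{-1}p = q$ or $h^{-1}p = h^{-1}q'$ of the relevant types pins $h$ down to one of finitely many values (of the form $p q^{-1}$, etc.), so since $H$ is infinite we may choose $h$ avoiding all of them.

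The right action, where $h$ acts by $S \mapsto Sh$, is handled by the same argument with left translation replaced by right translation throughout: the requirement becomes consistency of $P_1, P_2 h^{-1}$ against $N_1, N_2 h^{-1}$, and again only finitely many $h$ are excluded, so an admissible $h$ exists.

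The main obstacle is purely bookkeeping rather than conceptual: one must verify carefully that the set of ``forbidden'' group elements $h$ coming from the finitely many possible collisions between the ``in'' and ``out'' constraints is genuinely finite, which is where the hypothesis that $H$ is infinite is used in an essential way. Everything else is a direct unwinding of the definition of the product topology on $2^H$ and of topological transitivity.
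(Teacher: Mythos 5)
Your proof is correct and follows essentially the same strategy as the paper: both arguments exploit the fact that the constraints defining a basic open set of $2^H$ involve only finitely many elements of $H$, so that infiniteness of $H$ allows one to choose a translate $h$ separating the two constraint patterns and then merge them into a single witnessing subset. The paper phrases this by picking $h$ with $hE\cap F=\emptyset$ for the two finite support sets and taking $R=(hE\cap hS)\cup(T\cap F)$, while you avoid the finitely many collision values of $h$ directly and take $S=P_1\cup h^{-1}P_2$; these are the same idea in different bookkeeping.
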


\begin{proof}
We only prove the claim for the left action; the argument for the right action is symmetric. Let $U$, $V$ be any non-empty open subsets of $2^H$ and let $S\in U$, $T\in V$. By the definition of the topology on $2^H$, there exist  finite subsets $E,F\subset H$ such that for any $R\subseteq H$ satisfying $R\cap E=S\cap E$ (respectively, $R\cap F=T\cap F$), we have $R\in U$ (respectively, $R\in V$). Since $H$ is infinite, there is $h\in H$ such that $hE$ and $F$ are disjoint. Let
$$
R= (hE\cap hS) \cup (T \cap F).
$$
Clearly, $h^{-1}R\cap E= h^{-1} (R\cap hE) =S\cap E$ and $R\cap F=T\cap F$. Hence, $R\in hU \cap V$.
\end{proof}

We say that a subset of $2^H$ is \emph{$H$-bi-invariant} if it is invariant under both left and right actions of $H$. Let $D_L (H)$ (respectively, $D_R (H)$) denote the set of all subsets $S\subseteq H$ whose orbits with respect to the left (respectively, right) action of $H$ are dense in $2^H$.

\begin{prop}\label{dense}
For every countably infinite group $H$, the set $D(H)=D_L(H) \cap D_R(H)$ is non-empty and $H$-bi-invariant.
\end{prop}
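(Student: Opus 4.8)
The plan is to prove Proposition \ref{dense} by combining the topological transitivity established in the preceding lemma with a Baire category argument. The key observation is that $D_L(H)$ and $D_R(H)$ can each be described as countable intersections of open dense sets, and since $2^H$ is a compact Hausdorff space homeomorphic to the Cantor set (as $H$ is countably infinite), it is a Baire space. Thus the intersection $D(H)$ will be dense, hence in particular non-empty.

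First I would fix a countable base $\{U_n\}_{n\in\NN}$ for the topology of $2^H$; such a base exists because $2^H$ is second countable when $H$ is countable. For the left action, observe that a subset $S\subseteq H$ has a dense left orbit if and only if its orbit meets every basic open set, i.e. for each $n$ there exists $h\in H$ with $hS\in U_n$. For fixed $n$, the set $W_n^L=\{S\in 2^H\mid \exists\, h\in H,\ hS\in U_n\}=\bigcup_{h\in H} h^{-1}(U_n)$ is open, being a union of open sets (each $h^{-1}$ is a homeomorphism). I would then show $W_n^L$ is dense: given any non-empty open $V$, topological transitivity of the left action applied to $V$ and $U_n$ yields $h\in H$ with $h(V)\cap U_n\neq\emptyset$, equivalently $V\cap h^{-1}(U_n)\neq\emptyset$, so $V\cap W_n^L\neq\emptyset$. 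Hence each $W_n^L$ is open and dense, and $D_L(H)=\bigcap_{n\in\NN} W_n^L$ is a dense $G_\delta$ by Baire. The identical argument, using the right action, shows $D_R(H)=\bigcap_{n\in\NN} W_n^R$ is also a dense $G_\delta$.

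Since the intersection of two dense $G_\delta$ sets in a Baire space is again a dense $G_\delta$ (in particular non-empty), $D(H)=D_L(H)\cap D_R(H)$ is non-empty, which establishes the first assertion. For bi-invariance, I would argue directly: if $S$ has a dense left orbit, then for any $g\in H$ the set $gS$ (respectively $Sg$) has the same left orbit up to the homeomorphism given by the corresponding translation, and more simply, the left orbit of $gS$ equals $\{hgS\mid h\in H\}=\{h'S\mid h'\in H\}$, which is the left orbit of $S$ and hence dense; whether the right orbit of $gS$ or $Sg$ is dense follows because left and right translations commute, so applying a right translation permutes the left-orbit densely and vice versa. The main obstacle is to set up the invariance check cleanly: one must verify that $D_L(H)$ and $D_R(H)$ are each invariant under both actions, which requires observing that the left and right actions commute, so that a left (right) translate of a set with dense left (right) orbit again has dense left (right) orbit, and that translating by the other side merely relabels the dense orbit without shrinking its closure. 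Once both $D_L(H)$ and $D_R(H)$ are seen to be bi-invariant, their intersection $D(H)$ is automatically bi-invariant.
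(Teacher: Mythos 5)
Your proposal is correct and follows essentially the same route as the paper: both obtain non-emptiness by writing $D_L(H)$ and $D_R(H)$ as countable intersections of the open dense sets $\bigcup_{h\in H}h^{-1}(U_n)$ (resp.\ $\bigcup_{h\in H}(U_n)h^{-1}$), whose density comes from topological transitivity, and then invoke the Baire category theorem in the compact space $2^H$. The only (minor) difference is in the bi-invariance step, where you observe that left and right translations are commuting homeomorphisms carrying dense orbits to dense orbits, whereas the paper checks density of the translated orbits by hand via the finite-subset characterization of dense subsets of $2^H$; your version is slightly cleaner and equally valid.
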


\begin{proof}
Since $H$ is countable, there exists a countable basis of neighborhoods $\{U_i\}_{i\in \mathbb N}$ in $2^H$. For $i\in \NN$, let
$$
V_i= \left(\bigcup_{h\in H} hU_i\right) \cap \left(\bigcup_{h\in H} U_ih\right)
$$
and
$$
C=\bigcap_{i\in \NN} V_i.
$$
Since both left and right actions of $H$ on $2^H$ are topologically transitive, the sets $\bigcup_{h\in H} hU_i$ and $\bigcup_{h\in H} U_ih$ are dense in $2^H$; clearly, both of them are also open. Therefore, $C$ is non-empty by the Baire category theorem. Fix any $S\in C$. For any non-empty open $W\subseteq 2^H$, we have $U_i\subseteq W$ for some $i$. Since $S\in V_i$, there exist $h_1, h_2\in H$ such that $h_1^{-1}S$ and $Sh_2^{-1}$ belong to $U_i \subseteq W$. Thus, the orbits of $S$ with respect to the left and right actions of $H$ are dense in $2^H$. In particular, we have $C\subseteq D(H)$ and hence $D(H)\ne \emptyset$.

It is clear that for every $S\in D(H)$ and every $h\in H$, we have $hS\in D_L(H)$. Further, we note that a subset $D\subseteq 2^H$ is dense if and only if for every finite subsets $E\subseteq F\subseteq H$, there is $Q\in D$ such that $Q\cap F=E$. Consider any $S\in D(H)$, $h\in H$, and any finite $E\subseteq F\subseteq H$. Since $\{Sf\mid f\in H\}$ is dense in $2^H$, there is $g\in H$ such that $Sg\cap h^{-1}F=h^{-1}E$. The latter equation is equivalent to $hSg \cap F=E$. This shows that $hS\in D_R(H)$ and thus we have $hS\in D(H)$. Similarly, we have $Sh\in D(H)$ for every $S\in D(H)$ and $h\in H$.
\end{proof}

\begin{rem}
The use of the Baire category theorem in the proof of Proposition \ref{dense} can be avoided. In fact, this result can also be proved by an explicit but somewhat longer inductive argument. We leave this as an exercise to the interested reader.
\end{rem}

\paragraph{3.2. Proofs of the main results.}
Henceforth, let $B$ and $H$ be as in Theorem \ref{main} and let $$ W= B\Wr H. $$ We fix any $a,b \in B$ such that $ab\ne ba$. Given $S\subseteq H$, let $G_S$ denote the subgroup of $W$ generated by $H$ and the elements $\bar a, \bar b_S\in B^H$ defined as follows:
\begin{equation}\label{ab}
\bar a (x)=\left\{ \begin{array}{cc}
                     a, & {\rm if}\; x=1, \\
                     1, & {\rm if}\; x\ne 1,
                   \end{array}
\right. \;\;\; {\rm and}  \;\;\;
\bar b_S (x)=\left\{ \begin{array}{cc}
                     b, & {\rm if}\; x\in S, \\
                     1, & {\rm if}\; x\notin S.
                   \end{array}
\right.
\end{equation}

Let $$X=\{ x_1, \ldots, x_n\}$$ be a fixed finite generating set of $H$. We denote by $\xi\colon 2^H\to \G$ the map defined by the formula
$$
\xi (S)=(G_S, Y_S),
$$
where
$$
Y_S=(x_1, \ldots, x_n, \bar a, \bar b_S).
$$

\begin{lem}\label{inj}
The map $\xi$ is injective.
\end{lem}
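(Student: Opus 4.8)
The plan is to recover the set $S$ from the marked group $\xi(S)=(G_S,Y_S)$ by means of a family of word conditions, one for each $h\in H$. Suppose $\xi(S)=\xi(S')$. By the definition of the equivalence relation on $\G$, there is an isomorphism $\phi\colon G_S\to G_{S'}$ with $\phi(x_i)=x_i$ for all $i$, $\phi(\bar a)=\bar a$, and $\phi(\bar b_S)=\bar b_{S'}$. I will produce, for each $h\in H$, a word $c_h$ in the alphabet $Y_S^{\pm1}$ such that $c_h$ represents $1$ in $G_S$ if and only if $h\notin S$, and such that $\phi$ carries the value of $c_h$ in $G_S$ to the value of the same word (with $\bar b_S$ replaced by $\bar b_{S'}$) in $G_{S'}$. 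Injectivity of $\phi$ then forces $h\in S\iff h\in S'$ for every $h$, hence $S=S'$.

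The key computation takes place in the base group $B^H$, where products and commutators are evaluated coordinatewise. First I would record that, by the action formula \eqref{hfh}, the conjugate $h\bar a h^{-1}$ is the function taking the value $a$ at the point $h$ and $1$ at every other coordinate. Consequently the commutator $[\,h\bar a h^{-1},\,\bar b_S\,]$ is the element of $B^H$ equal to $[a,b]$ at $h$ when $h\in S$ and trivial at every other coordinate; since $ab\neq ba$, this commutator is nontrivial precisely when $h\in S$. Choosing a word $w_h$ in $X^{\pm1}$ representing $h$ and substituting $h=w_h$, the commutator becomes an explicit word $c_h$ in $Y_S^{\pm1}$, namely $w_h\bar a w_h^{-1}\,\bar b_S\, w_h\bar a^{-1} w_h^{-1}\,\bar b_S^{-1}$.

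Finally I would apply $\phi$. Because $\phi$ fixes each $x_i$ and $\bar a$ and sends $\bar b_S$ to $\bar b_{S'}$, it carries the element represented by $c_h$ in $G_S$ to the element obtained by reading the same word in $G_{S'}$, that is, to $[\,h\bar a h^{-1},\,\bar b_{S'}\,]$, which is nontrivial exactly when $h\in S'$. Since $\phi$ is injective, $c_h$ is trivial in $G_S$ if and only if its image is trivial in $G_{S'}$, giving $h\in S\iff h\in S'$. As $h$ ranges over $H$, we conclude $S=S'$.

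I do not anticipate a serious obstacle: the generating set $Y_S$ was chosen precisely so that $\bar b_S$ encodes the set $S$ while the conjugates of $\bar a$ isolate single coordinates of $B^H$, and the non-commutativity of $a$ and $b$ converts the statement ``$h\in S$'' into the single word condition $c_h\neq 1$. The only points requiring care are verifying that $c_h$ is genuinely a word in the fixed alphabet $Y_S^{\pm1}$ and that the marked isomorphism $\phi$ transports it verbatim to the corresponding word over $Y_{S'}$; both are immediate from the way $\phi$ acts on the generators.
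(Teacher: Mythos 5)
Your proof is correct and takes essentially the same route as the paper: the paper tests whether $s^{-1}\bar b_S s$ commutes with $\bar a$ (evaluated at the coordinate $1$), which is just the conjugate-inverse of your commutator $[\,h\bar a h^{-1},\bar b_S\,]$ evaluated at $h$, and both arguments hinge on $ab\ne ba$ turning ``$h\in S$'' into a word relation over the marked generating set. The only cosmetic difference is that you phrase the conclusion via an explicit isomorphism $\phi$ fixing the generators, whereas the paper simply notes that the two marked groups are not $\approx$-equivalent.
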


\begin{proof}
Suppose that $S,T\subseteq H$ and $S\ne T$. Without loss of generality, there is $s\in S\setminus T$. Using (\ref{hfh}), we obtain  $(s^{-1}\bar b_S s)(1)= \bar b_S (s)=b$ and hence $s^{-1}\bar b_S s \bar a\ne \bar as^{-1}\bar b_S s$. On the other hand, we have $(s^{-1}\bar b_T s)(1)= \bar b_T (s)=1$ and therefore $s^{-1}\bar b_T s \bar a= \bar as^{-1}\bar b_T s$. Thus, $(G_S, Y_S) \not\approx  (G_T, Y_T)$.
\end{proof}

In general, the map $\xi$ is not continuous. However, its restriction to $D_R(H)$ is. The main step in proving this fact is the following lemma. We denote by $|h|_X$ the length of an element $h\in H$ with respect to the generating set $X$ and let $$Ball_H(n)=\{ h\in H \mid |h|_X\le n\}.$$ For a word $w$ in some alphabet, we denote by $\| w\|$ its length.

\begin{lem} \label{ws}
Let $S$ and $T$ be elements of $D_R(H)$ such that
\begin{equation}\label{int}
S\cap Ball_H(2r)=T\cap Ball_H(2r)
\end{equation}
for some $r\in \NN$. A word $w_S$ of length $\| w_S\| \le r$ in the alphabet $Y_S^{\pm 1}$ represents $1$ in $W$ if and only if the word $w_T$ in the alphabet $Y_T^{\pm 1}$ obtained from $w_S$ by replacing each occurrence of $\bar b_S^{\pm 1}$ with $\bar b_T^{\pm 1}$ represents $1$ in $W$.
\end{lem}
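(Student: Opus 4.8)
The plan is to compute the two elements of $W=B\Wr H$ represented by $w_S$ and $w_T$ in the semidirect-product form $B^H\rtimes H$ and compare their $B^H$-components. Writing $w_S=y_1\cdots y_m$ with $m=\|w_S\|\le r$ and letting $p_j\in H$ be the image in $H$ of the prefix $y_1\cdots y_j$, repeated use of (\ref{hfh}) shows that $w_S$ equals $(f_S,h_S)$ with $h_S=p_m$ and
$$
f_S=\prod_{j}{}^{p_{j-1}}\!\big(\mathrm{val}(y_j)\big),
$$
the product taken in $B^H$ (pointwise, in increasing order of $j$) over those $j$ for which $y_j$ is a letter $\bar a^{\pm1}$ or $\bar b_S^{\pm1}$. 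Since replacing $\bar b_S$ by $\bar b_T$ touches no $x_i$-letter, the $H$-images agree: $h_S=h_T$. Hence if $h_S\ne1$ then $w_S$ and $w_T$ are both nontrivial and there is nothing to prove; so I may assume $h_S=h_T=1$, reducing the lemma to showing that $f_S$ and $f_T$ are simultaneously trivial as functions $H\to B$.

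Next I would record the pointwise shapes: ${}^{p}\bar a$ is supported on $\{p\}$ with value $a^{\pm1}$, while ${}^{p}\bar b_S$ is supported on $pS$ with value $b^{\pm1}$. Thus, for each $x\in H$, $f_S(x)$ is the ordered product over occurrences of $a^{\pm1}$ (when $p_{j-1}=x$) and $b^{\pm1}$ (when $p_{j-1}^{-1}x\in S$). The first key step is \emph{local agreement}: every prefix satisfies $|p_{j-1}|_X\le j-1<r$, so for $x\in Ball_H(r)$ each offset obeys $|p_{j-1}^{-1}x|_X\le 2r$, i.e.\ $p_{j-1}^{-1}x\in Ball_H(2r)$, where (\ref{int}) forces $p_{j-1}^{-1}x\in S\iff p_{j-1}^{-1}x\in T$. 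As the $\bar a$-contributions are independent of $S$ and $T$, this gives $f_S(x)=f_T(x)$ for all $x\in Ball_H(r)$. In particular, if the support of $f_S$ meets $Ball_H(r)$, the two functions are simultaneously nontrivial and we are done.

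It remains to treat a point $x_0\notin Ball_H(r)$ with $f_S(x_0)\ne1$; this is the heart of the argument and the only place $S,T\in D_R(H)$ is used. Since every $\bar a$-position $p_{j-1}$ lies in $Ball_H(r)$, no $\bar a$-letter is active at $x_0$, so $f_S(x_0)=b^{k}\ne1$, where $k$ is the signed count of $\bar b$-occurrences with $p_{j-1}^{-1}x_0\in S$. Writing $q_1,\dots,q_l$ for the prefix projections at the $\bar b$-occurrences, $f_S(x_0)$ depends only on the finite pattern $\big(\mathbf 1[q_i^{-1}x_0\in S]\big)_i$. I would then invoke density of the right orbit of $T$ on the window $\{q_i^{-1}\}$ to produce $g\in H$ with $q_i^{-1}g^{-1}\in T\iff q_i^{-1}x_0\in S$ for all $i$; setting $x_0'=g^{-1}$ reproduces the same $b$-pattern, so the $\bar b$-part of $f_T(x_0')$ equals $b^{k}$. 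To rule out interference from $\bar a$-letters I must also secure $x_0'\notin Ball_H(r)$, which follows because density on arbitrarily large windows containing $\{q_i^{-1}\}$ yields infinitely many admissible $g$ (distinct extensions of the pattern require distinct translates), so one may choose $g^{-1}$ outside the finite ball. Then $f_T(x_0')=b^{k}\ne1$, so $f_T\ne1$; the converse is identical with $S$ and $T$ interchanged.

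The main obstacle is precisely this last step: $f_S$ and $f_T$ can genuinely differ away from the origin, so the conclusion cannot be purely local, and one needs the genericity of $S,T$ to transport a witnessing $b$-pattern from $S$ to $T$ while avoiding the finitely many (necessarily nearby) $\bar a$-positions. The length bookkeeping giving $|p_{j-1}^{-1}x|_X\le 2r$ matches exactly the radius in (\ref{int}), and matching the $b$-exponent $k$ rather than merely its nonvanishing keeps the argument insensitive to the order of $b$.
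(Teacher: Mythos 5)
Your proof is correct, and it rests on the same three ingredients as the paper's: reduction to the $B^H$-component (the $H$-components of $w_S$ and $w_T$ coincide), the observation that hypothesis (\ref{int}) forces $f_S$ and $f_T$ to agree on $Ball_H(r)$ because every prefix projection lies in $Ball_H(r-1)$, and the use of $D_R(H)$ to handle evaluation points outside the ball. The one genuine structural difference is how the density hypothesis is deployed. The paper argues directly: assuming $w_S=_W1$, it groups the $\bar b$-occurrences into classes $J_m$ according to the $H$-value of their prefixes, uses density of the right orbit of $S$ to isolate each class at a far point, deduces $b^{\sigma_m}=1$ for every $m$, and then verifies $w'_T(h)=1$ at \emph{every} $h$. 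You instead transport a single witness: if $f_S(x_0)\ne 1$ with $x_0$ outside the ball, density of the right orbit of $T$ yields a point $x'_0=g^{-1}$ realizing the same activation pattern, hence the same value $b^k\ne 1$; the two arguments are dual, with yours proving $w_T=_W1\Rightarrow w_S=_W1$ via its contrapositive using density of $T$'s orbit where the paper proves the other implication using density of $S$'s orbit, both then appealing to the $S\leftrightarrow T$ symmetry. Your route is slightly more economical (no need for the classes $J_m$ or the identities $b^{\sigma_m}=1$), while the paper's exhibits explicitly the $S$-independent criterion for triviality. You correctly identified and closed the one place a careless transport would fail — securing $x'_0\notin Ball_H(r)$ so that no $\bar a$-letter interferes, via the observation that there are infinitely many admissible translates $g$ — which is the exact analogue of the paper's choice of $h\notin\{u_1,\ldots,u_\ell\}$.
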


\begin{proof}
Throughout this proof, we write $u=v$ for two words $u$, $v$ in some alphabet $A$ if they represent the same element of the free groups with the basis $A$. If $u$ and $v$ are words in the alphabet $Y_S^{\pm 1}$ or $Y_T^{\pm 1}$, we write $u=_Wv$ if $u$ and $v$ represent the same element of the group $W=B\Wr H$.

We will prove the forward implication. That is, we assume that $w_S=_W1$. In the free group with the basis $Y_S$, we can rewrite the word $w_S$ as  $w_S=w_S^\prime t,$ where $t$ is a word in the alphabet $X^{\pm 1}$ and
$$
w_S^{\prime}= u_1 \bar a^{\alpha _1}u_1^{-1} \cdot v_1\bar b_S^{\beta_1} v_1^{-1} \; \cdots\;   u_\ell \bar a^{\alpha _\ell}u_\ell^{-1} \cdot v_\ell\bar b_S^{\beta_\ell} v_\ell^{-1}
$$
for some $\alpha_i, \beta_i \in \ZZ$ and words $u_i$, $v_i$ in the alphabet $X^{\pm 1}$ of length $\| u_i\|, \| v_i\| \le r$ for all $i$. Similarly, we have $w_T=w_T^\prime t,$ where
$$
w_T^{\prime}= u_1 \bar a^{\alpha _1}u_1^{-1}  \cdot v_1\bar b_T^{\beta_1} v_1^{-1}\; \cdots\;  u_\ell \bar a^{\alpha _\ell}u_\ell^{-1} \cdot v_\ell\bar b_T^{\beta_\ell} v_\ell^{-1}
$$
in the free group with the basis $Y_T$. Since $w_S=_W1$, the word $t$ must represent $1$ in $H$. Therefore,
\begin{equation}\label{w=1}
w^\prime_S=_{W}1
\end{equation}
and to prove the lemma it suffices to show that $w^\prime_T=_{W}1$. To this end, we first prove an auxiliary result about the exponents $\beta_1 , \ldots , \beta _\ell$.

Let $\sim $ be the equivalence relation on the index set $I=\{ 1, \ldots, \ell\}$ defined by the rule
\begin{center}
$i\sim j$ if  and  only if $v_i =_H v_j$.
\end{center}
Let $$I= J_1\sqcup \ldots\sqcup J_k$$ be the associated partition into equivalence classes. For $m\in \{ 1, \ldots, k\}$, let also $$\sigma _m =\sum\limits_{i\in J_m} \beta_i.$$ We want to show that for all $m\in \{ 1, \ldots, k\}$, we have
\begin{equation}\label{bs}
b^{\sigma_m}=1.
\end{equation}

Fix any $m\in \{ 1, \ldots, k\}$. For any non-empty open subset $O$ of $2^H$, we can find infinitely many $h\in H$ such that $Sh^{-1}\in O$ since $S\in D_R(H)$. In particular, we can find $h\notin \{ u_1, \ldots, u_\ell\}$ such that $v_{i}^{-1}\in Sh^{-1}$ if and only if $i\in J_m$. By (\ref{ab}), we have $\bar a (u_i^{-1}h)=1$ for all $i\in I$ and
$$
\bar b_S(v_i^{-1}h)=\left\{\begin{array}{cc}
                             b, & {\rm if\;} i\in J_m \\
                             1, & {\rm otherwise.}
                           \end{array}
 \right.
$$
Thinking of $w^\prime_S$ as an element of $B^H$ and using (\ref{hfh}), we obtain
$$
w_S^\prime (h) = \bar a^{\alpha_1}(u_1^{-1}h)\cdot  \bar b_S^{\beta_1}(v_1^{-1}h) \;\cdots\;  \bar a^{\alpha_\ell}(u_\ell^{-1}h) \cdot \bar b_S^{\beta_\ell}(v_\ell^{-1}h) = b^{\sigma_m}
$$
in the group $B$. Thus, (\ref{bs}) follows from (\ref{w=1}).

We are now ready to prove that $w^\prime _T=_W1$. Clearly, $w^\prime _T$ also represents an element of $B^H$. We will show that $w^\prime _T(h)=1$ for all $h\in H$. There are two cases to consider.

{\it Case 1.} First assume that $|h|_X\le r$. Then for every $i=1, \ldots , \ell$, we have $$|v_i^{-1}h|_X\le | v_i|_X +|h|_X \le \| v_i\| +|h|_X \le 2r.$$ Using (\ref{hfh}), (\ref{ab}),  and (\ref{int}), we obtain
$$
v_i\bar b_T^{\beta_i} v_i^{-1} (h)= \bar b_T^{\beta_i}(v_i^{-1}h)= \bar b_S^{\beta_i}(v_i^{-1}h) = v_i\bar b_S^{\beta_i} v_i^{-1} (h).
$$
Hence, $w_T^\prime (h)= w_S^\prime (h)=1$ in $B$.

{\it Case 2.} Let  $|h|_X> r$. For every $i=1, \ldots , \ell$, we have $u_i^{-1}h\ne 1$ as $|u_i|_X\le \| u_i\| \le r$. Therefore,
$u_i \bar a^{\alpha _i}u_i^{-1}(h)=\bar a ^{\alpha_i} (u_i^{-1}h)=1$ and we obtain
$$
w^\prime_T (h) = \bar b_T^{\beta_1}(v_1^{-1}h) \;\cdots\;   \bar b_T^{\beta_\ell}(v_\ell^{-1}h) = \prod_{m=1}^k \bar b_T ^{\sigma_m}(v_{j_m}^{-1}h),
$$
where ${j_m}$ is any representative of the equivalence class $J_m$. Since $\bar b_T (v_{j_m}^{-1}h)\in \{ 1, b\}$ by the definition of $\bar b_T$, we get $\bar b_T ^{\sigma_m}(v_{j_m}^{-1}h)=1$ for all $m$ by (\ref{bs}). Thus, $w^\prime _T(h)=1$ in this case as well.
\end{proof}

\begin{cor}\label{cont}
The restriction of $\xi$ to the subspace $D_R(H)$ of $2^H$ is continuous.
\end{cor}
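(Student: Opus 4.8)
The plan is to combine Lemma \ref{ws} with the elementary criterion of Lemma \ref{GAHB}. Fix $S\in D_R(H)$ and $r\in\NN$; the goal is to produce a neighborhood of $S$ in $D_R(H)$ whose image under $\xi$ lies in the basic open set $U_{G_S,Y_S}(r)$. Since $H$ is finitely generated, the ball $Ball_H(4r)$ is finite, so
$$
O=\{T\subseteq H\mid T\cap Ball_H(4r)=S\cap Ball_H(4r)\}
$$
is an open neighborhood of $S$ in $2^H$. I claim that $\xi(O\cap D_R(H))\subseteq U_{G_S,Y_S}(r)$, which establishes continuity of the restriction $\xi|_{D_R(H)}$ at $S$.

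The key step is a careful matching of the two radius parameters. To invoke Lemma \ref{GAHB} and conclude $(G_S,Y_S)\approx_r(G_T,Y_T)$, I need every word of length at most $2r$ in the alphabet $Y_S^{\pm1}$ to represent $1$ in $G_S$ exactly when the corresponding word in $Y_T^{\pm1}$ represents $1$ in $G_T$. Because $G_S$ and $G_T$ are the subgroups of $W$ generated by $Y_S$ and $Y_T$ respectively, representing $1$ in $G_S$ (resp. $G_T$) is the same as representing $1$ in $W$. It therefore suffices to apply Lemma \ref{ws} with the parameter $2r$ in place of $r$: for $T\in O\cap D_R(H)$ we have $S\cap Ball_H(2\cdot 2r)=T\cap Ball_H(4r)$, so any word of length at most $2r$ in $Y_S^{\pm1}$ equals $1$ in $W$ if and only if the corresponding word in $Y_T^{\pm1}$ does.

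With this established, Lemma \ref{GAHB} yields $(G_S,Y_S)\approx_r(G_T,Y_T)$, that is, $\xi(T)\in U_{G_S,Y_S}(r)$, for every $T\in O\cap D_R(H)$. Since $S$ and $r$ were arbitrary, $\xi$ is continuous on $D_R(H)$.

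The only genuine subtlety is the bookkeeping of radii: Lemma \ref{ws} controls words of length $\le r$ using agreement of $S$ and $T$ on $Ball_H(2r)$, whereas the $\approx_r$-criterion of Lemma \ref{GAHB} demands control of words of length $\le 2r$. Substituting $2r$ for $r$ in Lemma \ref{ws} is what forces the ball $Ball_H(4r)$ in the definition of $O$, and nothing else requires care. Note also that the restriction to $D_R(H)$ enters exactly and only because both $S$ and the nearby sets $T$ must lie in $D_R(H)$ for the hypotheses of Lemma \ref{ws} to be satisfied.
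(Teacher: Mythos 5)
Your proof is correct and follows essentially the same route as the paper: the paper likewise takes the neighborhood defined by agreement on $Ball_H(4r)$ and combines Lemma \ref{ws} with Lemma \ref{GAHB}, leaving the radius bookkeeping (applying Lemma \ref{ws} with $2r$ in place of $r$) implicit where you spell it out.
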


\begin{proof}
Note that $\xi(S)\in \G_{n+2}$ for every $S\subseteq H$. Thus we can work with $\G_{n+2}$ instead of $\G$. Consider any $S\in D_R(H)$ and any open neighborhood $\mathcal N$ of $\xi(S)=(G_S, Y_S)$ in $\G_{n+2}$. By the definition of the topology on $\G_{n+2}$, there exists $r\in \NN$ such that $\mathcal N$ contains the set $$U_{G_S,Y_S}(r)=\{ (G,A)\in \G_{n+2}\mid (G,A)\approx _r (G_S,Y_S)\}.$$

Suppose that some $T\in D_R(H)$ satisfies the condition
\begin{equation}\label{d}
S\cap Ball_H(4r)=T\cap Ball_H(4r).
\end{equation}
Combining Lemma \ref{ws} and Lemma \ref{GAHB}, we obtain $(G_T,Y_T)\approx _r(G_S,Y_S)$. This means that $\xi(T)\in \mathcal N$. Since subsets $T\subseteq H$ satisfying (\ref{d}) form an open neighborhood of $S$ in $D_R(H)$, we are done.
\end{proof}

We are now ready to prove our main result.

\begin{proof}[Proof of Theorem \ref{main}]
By Proposition \ref{dense}, there exists $S\subseteq H$ such that $hS\in D(H)$ and $Sh\in D(H)$ for all $h\in H$. In particular, the subset
$$
H(S)=\{hS\mid h\in H\}\subseteq 2^H
$$
is non-discrete and contained in $D_R(H)$.  By Lemma \ref{inj} and Corollary \ref{cont}, the image of $H(S)$ in $\G$ is also non-discrete. In the notation introduced above, we clearly have $\bar b_{hS} (x) = \bar b_S (h^{-1}x)$ for any $h,x\in H$. Therefore, $\bar b_{hS} = h \bar b_S h^{-1}$ in $W$. It follows that $G_S = G_{hS}$ for every $h\in H$ and hence $\xi(H(S))\subseteq [G_S]$. Thus, $[G_S]$ is non-discrete in $\G$. It remains to apply Proposition \ref{dich}.
\end{proof}

We say that a subset $\mathcal S\subseteq \G$ is \emph{isomorphism-invariant} if for every $(G,A)\in \mathcal S$, we have $[G]\subseteq \mathcal S$. To derive Corollary \ref{cor1}, we need some results obtained in \cite{Osi20}. For convenience of the reader, we summarize them below.

\begin{thm}\label{01}
For any isomorphism-invariant closed subset $\mathcal S\subseteq \G$, the following hold.
\begin{enumerate}
\item[(a)] The isomorphism relation on $\mathcal S$ is smooth if and only if $\mathcal S$ contains no condensed groups.
\item[(b)] If $\mathcal S$ contains a condensed group, then $\mathcal S$ contains $2^{\aleph_0}$ elementarily equivalent, pairwise non-isomorphic groups.
\end{enumerate}
\end{thm}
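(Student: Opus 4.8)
The plan is to treat (a) and (b) as consequences of two classical pieces of descriptive set theory applied to the dynamical description of $\G$ recalled just before Proposition \ref{dich}: the Glimm--Effros dichotomy of Harrington--Kechris--Louveau, and a Baire-category $0$--$1$ law for topologically transitive actions. First I would fix the framework. The space $\G$ is Polish, and by hypothesis the isomorphism relation $E$ is the orbit equivalence relation of a (countable) group $\Gamma$ acting on $\G$ by homeomorphisms. Each class $[G]$ is countable --- for fixed $n$ a finitely generated group has only countably many $n$-element generating tuples --- and $E$ is Borel, since $(G,A)\mathrel{E}(H,B)$ in $\G_n$ is the countable union, over all tuples of words, of the closed conditions asserting that a prescribed change of markings defines mutually inverse homomorphisms. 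Hence $E$ is a countable Borel equivalence relation, and so is its restriction to the closed invariant set $\mathcal S$. By Proposition \ref{dich}, a group $G$ with $(G,A)\in\mathcal S$ is condensed exactly when its class $[G]$, as a subspace of $\G$, has no isolated points.

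For (a) I would prove the two implications separately. For ``no condensed $\Rightarrow$ smooth'' I argue by contraposition using Glimm--Effros: if $E|_{\mathcal S}$ is not smooth, there is a continuous injection $f\colon 2^{\NN}\to\mathcal S$ with $x\mathrel{E_0}y\iff f(x)\mathrel{E}f(y)$. Each $E_0$-class is dense in itself, and $f$ is a homeomorphism onto its image sending each $E_0$-class into a single isomorphism class; hence every $f(x)$ is a non-isolated point of $[f(x)]$, so by Proposition \ref{dich} that class has no isolated points and $f(x)$ is a condensed group lying in $\mathcal S$. For the reverse implication ``condensed $\Rightarrow$ non-smooth'' I take a condensed class $[G]\subseteq\mathcal S$: it is countable with no isolated points, and its closure $\overline{[G]}\subseteq\mathcal S$ is perfect (an isolated point of $\overline{[G]}$ would, by density, lie in $[G]$ and be isolated there). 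Such a $[G]$ cannot be locally closed, for otherwise it would be dense and open, hence comeager, in the perfect Polish space $\overline{[G]}$, which is impossible for a countable set. A non-locally-closed orbit of a continuous group action forces non-smoothness by Effros' theorem (equivalently, $E_0$ continuously embeds into $E$ on $\overline{[G]}$). Together these give the equivalence in (a).

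For (b) I would exploit the dynamics on $\overline{[G]}$ directly. This is a perfect Polish space contained in $\mathcal S$ on which $\Gamma$ acts with the dense orbit $[G]$, so the action is topologically transitive. For each sentence $\phi$ in the language of groups the set $A_\phi=\{(H,B)\mid H\models\phi\}$ is isomorphism-invariant, hence $\Gamma$-invariant, and Borel (by induction on $\phi$, quantifiers becoming countable unions and intersections over words), so it has the Baire property. The topological $0$--$1$ law then shows that $A_\phi\cap\overline{[G]}$ is either meager or comeager, and exactly one of $A_\phi,A_{\neg\phi}$ is comeager; here I would verify the $0$--$1$ law by using second countability (Lindel\"of) to reduce the invariant comeagerness to countably many translates. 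Intersecting, over the countably many sentences, the comeager one of each pair $A_\phi,A_{\neg\phi}$ yields a comeager set $C\subseteq\overline{[G]}$ on which all groups realize one and the same complete theory, i.e.\ are pairwise elementarily equivalent. Finally $C$, being comeager in a perfect Polish space, has cardinality $2^{\aleph_0}$, whereas each isomorphism class is countable; so $C$ meets $2^{\aleph_0}$ distinct classes, and picking one marked group from each produces $2^{\aleph_0}$ pairwise non-isomorphic, elementarily equivalent, finitely generated groups in $\mathcal S$.

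I expect the main obstacles to be twofold. The technical overhead is checking that the hypotheses of the abstract machinery genuinely hold --- that $E$ is a countable Borel equivalence relation and that first-order satisfaction is Borel on $\G$ --- since all later steps rest on this. The conceptually harder direction is ``condensed $\Rightarrow$ non-smooth'': the two directions of (a) are not symmetric, and this one needs the Effros local-closedness criterion (or an explicit $E_0$-embedding extracted from the dense-orbit dynamics) rather than a soft contrapositive. A subtle point in (b) that I would be careful about is that the common ``generic'' theory on $C$ need not coincide with $\mathrm{Th}(G)$ --- the orbit $[G]$ is meager in $\overline{[G]}$ --- so elementary equivalence on $C$ must be witnessed by the generic theory, not by $G$ itself.
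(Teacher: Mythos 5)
Your proposal is correct, but it takes a genuinely different route from the paper: the paper's ``proof'' of Theorem \ref{01} is essentially a citation --- part (a) is quoted verbatim as Proposition 2.7 of \cite{Osi20}, and part (b) is deduced by citing Theorem 2.2 and Proposition 2.3 of \cite{Osi20} (which say that the closure of a condensed class $[G]$ already contains $2^{\aleph_0}$ elementarily equivalent, pairwise non-isomorphic groups) --- so the only argument actually carried out in this paper is the short gluing step that $\overline{[G]}\subseteq \mathcal S$ because $\mathcal S$ is closed and isomorphism-invariant, a step you also include. What you do instead is reconstruct the black boxes from first principles, and your reconstruction is sound: for (a), the contrapositive via the Harrington--Kechris--Louveau dichotomy correctly produces condensed groups in $\mathcal S$ from a continuous embedding of $E_0$ (using that $E$ is a countable Borel equivalence relation and Proposition \ref{dich}), and the converse via local closedness is right --- a countable dense-in-itself orbit cannot be open in its perfect closure by Baire, and a non-locally-closed orbit of the homeomorphism group action kills smoothness (your parenthetical alternative, generic ergodicity of the topologically transitive action on $\overline{[G]}$ together with meagerness of the countable classes, is the most robust way to finish and avoids any quibbles about which form of Effros' theorem applies). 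For (b), your argument --- Borelness of first-order satisfaction, the topological zero-one law on $\overline{[G]}$, intersecting the comeager sides over countably many sentences, and counting classes --- is exactly the mechanism behind the cited results of \cite{Osi20} (whose title is the zero-one law), and your caveat that the generic theory on the comeager set need not be $\mathrm{Th}(G)$ is a genuine subtlety handled correctly. Two minor sourcing points: this paper never asserts that the acting homeomorphism group is countable (that is established in \cite{Osi20}; it is generated by countably many marking moves), though for the zero-one law itself topological transitivity suffices and countability is not needed; and you should note that smoothness restricts to the closed invariant subset $\overline{[G]}\subseteq\mathcal S$, which you implicitly use. What your approach buys is a self-contained proof; what the paper's buys is brevity, outsourcing all descriptive-set-theoretic content to \cite{Osi20}.
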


For part (a), see Proposition 2.7 in \cite{Osi20}. To obtain part (b), let $G$ be a condensed group such that $(G,A)\subseteq \mathcal S$ for some $A$. By Theorem 2.2 and Proposition 2.3 of \cite{Osi20}, the closure of the isomorphism class $[G]$ in $\G$ contains $2^{\aleph_0}$ elementarily equivalent, pairwise non-isomorphic groups. Since $\mathcal S$ is closed and isomorphism-invariant, it contains the closure of $[G]$ and claim (b) follows.

\begin{proof}[Proof of Corollary \ref{cor1}]
By assumption, there exists a non-abelian group $B\in \mathcal U$ and a finitely generated infinite group $H\in \mathcal V$. Applying Theorem \ref{main}, we obtain a condensed group $G\le B\Wr H\in \mathcal{UV}$. Since varieties of groups are closed under taking subgroups, we have $G\in \mathcal{UV}$.

It is well-known that for every variety $\mathcal W$, the set $\mathcal S=\{ (G,A)\in \G \mid G\in \mathcal W\}$ is closed in $\G$. Indeed, if a sequence $(G_i,A_i)$ converges to $(G,A)$ in $\G$ then every identity that holds in each $(G_i, A_i)$ also holds in $(G,A)$ by the definition of the topology on $\G$. It is also clear that $\mathcal S$ is isomorphism-invariant. Thus, claims (a) and (b) of the corollary follow from the corresponding parts of Theorem \ref{01}.
\end{proof}

\noindent \textbf{Denis Osin: } Department of Mathematics, Vanderbilt University, Nashville 37240, U.S.A.\\
E-mail: \emph{denis.v.osin@vanderbilt.edu}
\end{document}